\documentclass[12pt,oneside]{article}
\usepackage{amsmath,amssymb,amsfonts,amsthm}
\usepackage{color}
\usepackage[all]{xy}
\usepackage{graphicx}
\usepackage{color}
\usepackage{makeidx}
\usepackage{multirow}
\usepackage{rotating}
\usepackage{pdfpages}
\usepackage{pdflscape}

\textheight = 9.4in            
\textwidth = 6.8in \leftmargin=1.25in \rightmargin=1.25in
\topmargin=0.7in
\parindent=0.3in
\hoffset -1.8truecm \voffset -3truecm





\setlength\arraycolsep{2pt}    

\numberwithin{equation}{section} 
\numberwithin{figure}{section} 

\theoremstyle{plain}
\newtheorem*{theorem*}{Theorem}
\newtheorem{theorem}{Theorem}[section]

\newtheorem{proposition}[theorem]{Proposition}
\newtheorem{corollary}[theorem]{Corollary}
\newtheorem{remark}[theorem]{Remark}

\newtheorem*{acknowledgement*}{Acknowledgement}

\numberwithin{equation}{section}

\newcommand\overcirc[1]{\raisebox{10pt}{\tiny{$\circ$}}{\kern-7.5pt}\mbox{$#1$}}
\newcommand\undersym[2]{\raisebox{-6pt}{$#2$}{\kern-5pt}\mbox{$#1$}}
\newcommand\overdiamond[1]{\raisebox{10pt}{\small$\star$}{\kern-7.5pt}\mbox{$#1$}}
\newcommand\overast[1]{\raisebox{10pt}{\small$\ast$}{\kern-7.5pt}\mbox{$#1$}}
\newcommand\overlind[1]{\raisebox{10pt}{\small$\overline{{\hspace{2pt}}\star}$}{\kern-7.5pt}\mbox{$#1$}}
\newcommand\overlinc[1]{\raisebox{10pt}{\small$\overline{{\hspace{2pt}}\circ}$}{\kern-7.5pt}\mbox{$#1$}}
\newcommand\overlina[1]{\raisebox{10pt}{\small$\overline{{\hspace{1pt}}\ast}$}{\kern-7.5pt}\mbox{$#1$}}

\begin{document}

\title{ A note on \lq\lq On the classification of Landsberg spherically symmetric Finsler metrics\rq\rq}
\author{S. G. Elgendi}
\date{}

\maketitle
\vspace{-1 cm}

\begin{center}
{Departmernt of Mathematics, Faculty of Science,\\
Islamic University of Madinah, Madinah, Saudi Arabia}
\vspace{-8pt}
\end{center}

\begin{center}
{Department of Mathematics, Faculty of Science,\\
Benha University, Benha, Egypt}
\vspace{-8pt}
\end{center}

\begin{center}
salah.ali@fsci.bu.edu.eg, salahelgendi@yahoo.com
\end{center}

\vspace{0.3cm}
\begin{abstract} 
In this  paper,    we prove that all spherically symmetric Landsberg surfaces are Berwaldian. We modify  the classification of   spherically symmetric Finsler  metrics, done by the author
in [S. G. Elgendi, On the classification of Landsberg spherically symmetric Finsler metrics,
Int. J. Geom. Methods Mod. Phys. 18 (2021)],  of Berwald type of dimension $n\geq 3$. Precisely, we show that all Berwald spherically symmetric  metrics of dimension $n\geq 3$ are Riemannian or given by a certain formula. As a simple class of Berwaldian metrics, we prove that all spherically symmetric metrics in which  the function $\phi$ is homogeneous of degree $-1$ in $r$ and $s$ are Berwaldian.   
\end{abstract}

\noindent{\bf Keywords:\/}\,   spherically symmetric metrics; Berwlad metrics; Landsberg metrics

\medskip\noindent{\bf MSC 2020:\/}  53B40; 53C60


\section{Introduction}
~\par

In Finsler geometry, the  existence of a regular non-Berwladian Landsberg Finsler metric   is still an open problem. In the two-dimensional (2D) case,   that   problem seems  more complicated.  Some non-regular  Landsberg Finsler metrics which are not Berwladian are known in higher dimensions (cf. \cite{Asanov,Elgendi-solutions,Shen_example}).  But in dimension two,  to the best of our knowledge, no concrete examples are given. There is a class of examples of non-Berwaldian Landsberg spherically symmetric surfaces  obtained by L. Zhou \cite{Zhou}.  But in a joint paper of the author (cf. \cite{Elgendi-Youssef}),  it was proven that this class is, in   fact, Berwaldian.

\medskip

  In \cite{Elgendi-SSM}, we have classified all Landsberg spherically symmetric Finsler metrics of dimension $n\geq 3$.  Precisely, we prove that all Landsberg spherically symmetric   metrics of dimension $n\geq 3$ are Riemannian or its geodesic spray is given by a certain formula. In this paper, we complete this classification by showing that all 2D Landsberg spherically symmetric Finsler metrics (regular or non-regular)  are Berwladian.

\medskip 

Also, in \cite{Elgendi-SSM}, we have  proven that all Berwaldian spherically symmetric   metrics of dimension $n\geq 3$ are Riemannian. But we discovered a missing case in the proof. In this paper, we modify this result and prove that all Berwaldian spherically symmetric   metrics of dimension $n\geq 3$ are Riemannian or the function $\phi$ is given by
  $$
\phi= s\ \psi\left(\frac{s^2}{g(r)+s^2 \int 4 r c_0(r) g(r) d r}\right) e^{-\int\left(\frac{2}{r}-2 r^3 c_0(r)\right) dr} 
$$
where $c_0(r)$ is a smooth function and $g(r)=e^{\int\left(\frac{2}{r}-4 r^3 c_0(r)\right) dr}$.

\medskip

At the end of this paper we provide a table classifying all spherically symmetric Finsler metrics of Landsberg and Berwald types. 
\section{Spherically symmetric metrics}

~\par Throughout,  we use the notations and terminology of \cite{Elgendi-SSM}.
  A spherically symmetric Finsler metric $F$ on $\mathbb{B}^n(r_0)\subset\mathbb{R}^n$ is defined by
$$F(x,y)=|y|\ \phi\left(|x|,\frac{\langle x, y \rangle}{|y|}\right),$$

where   $\phi:[0,r_0)\times\mathbb{R}^n\to \mathbb{R}$, $(x,y)\in T\mathbb{B}^n(r_0)\backslash \{0\}$, and $|\cdot|$ and $\langle \cdot , \cdot \rangle$ are the standard Euclidean norm and inner product on $\mathbb{R}^n$. Or simply,  $F=u\ \phi(r,s)$ where $r=|x|$, $u=|y|$ and $s=\frac{\langle x, y \rangle}{|y|}$.

 \medskip

The spherically symmetric metrics are a special  general $(\alpha,\beta)$-metrics.  Therefore, a spherically symmetric metric  $F=u\phi(r,s)$ on $\mathbb{B}^n(r_0)$ is regular if and only if $\phi$ is positive,  $C^\infty$ function such that
\begin{equation*}
\label{Regular_condition}
\phi-s\phi_s>0,\quad \phi-s\phi_s+(r^2-s^2)\phi_{ss}>0.
\end{equation*}
Moreover, in case of $n=2$, the regularity condition is  $$\phi-s\phi_s+(r^2-s^2)\phi_{ss}>0$$
    for all $|s|\leq r<r_0$. The subscript $s$ (resp. $r$) refers to the derivative with respect to $s$ (rep. $r$). 
The spherically symmetric Finsler metrics are studied in many papers for more details, we refer   to, \cite{Guo-Mo,Zhou_Mo,Zhou}.

\medskip

Since the components of the metric tensor associated with  the Euclidean norm are just the Kronecker  delta $\delta_{ij}$, then we  lower the indices of $y^i$ and $x^i$ as follows
$$y_i:=\delta_{ih} y^h, \quad x_i:=\delta_{ih} x^h.$$
It should be noted that $y_i$ and $x_i$ are the same as $y^i$ and $x^i$ respectively. So we confirm that $y_i\neq F\frac{\partial F}{\partial y^i}$ rather $y_i= u\frac{\partial u}{\partial y^i}$. Moreover, we have the following properties
$$y^iy_i=u^2, \quad x^ix_i=r^2, \quad y^ix_i=x^iy_i=\langle x,y \rangle.$$

 By making use of the above notations and properties, we are able to  keep the indices  consistent.   Many articles in the literature study the geometric objects associated to  a spherically symmetric Finsler metric   with some kind of inconsitency  with the indices.  For example, in \cite{Zhou},   a tensorial equation has an up index in one side and in the other side the same index is lower index. We tried to  solve this problem in \cite{Elgendi-SSM} but with a bit long formulae. In this paper, we fix it with a simple and natural way.
 
 \medskip

The components $g_{ij}$ of the metric tensor of the spherically symmetric metric $F=u \phi(r,s)$ are given by  
\begin{equation}
\label{Eq:g^ij}
g_{ij}=\sigma_0\ \delta_{ij}+\sigma_1\  x_ix_j+\frac{\sigma_2}{u} (x_iy_j+x_jy_i)+\frac{\sigma_3}{u^2}y_iy_j,
\end{equation}
where  $$\sigma_0=\phi(\phi-s\phi_s),\quad \sigma_1= \phi_s^2+\phi\phi_{ss},\quad \sigma_2= (\phi-s\phi_s)\phi_s-s\phi\phi_{ss},  \quad \sigma_3= s^2\phi\phi_{ss}-s(\phi-s\phi_s)\phi_s.$$

The following geometric objects can be found in \cite{Guo-Mo,Zhou_Mo,Zhou}. 
The components $g^{ij}$ of the inverse metric tensor are given  as follows
\begin{align}
\label{Eq:g^ij}
g^{ij}=&\rho_0\delta^{ij}+ \frac{\rho_1}{u^2}y^iy^j+ \frac{\rho_2}{u} (x^iy^j+x^jy^i)+\rho_3x^ix^j,
\end{align}
where
$$
 \rho_0=\frac{1}{\phi(\phi-s\phi_s)}, \quad \rho_1=\frac{(s\phi+(r^2-s^2)\phi_s)(\phi\phi_s-s\phi_s^2-s\phi\phi_{ss})}{\phi^3(\phi-s\phi_s)(\phi-s\phi_s+(r^2-s^2)\phi_{ss})},$$
 $$\rho_2=-\frac{\phi\phi_s-s\phi_s^2-s\phi\phi_{ss}}{\phi^2(\phi-s\phi_s)(\phi-s\phi_s+(r^2-s^2)\phi_{ss})},\quad \rho_3=-\frac{\phi_{ss}}{\phi(\phi-s\phi_s)(\phi-s\phi_s+(r^2-s^2)\phi_{ss})}.$$
  The coefficients  $G^i$ of the geodesic spray  of $F$ are  given by
\begin{equation}\label{G}
  G^i=uPy^i+u^2 Qx^i,
\end{equation}
where the functions $P$ and $Q$ are defined by
\begin{equation}\label{P,Q}
 Q:=\frac{1}{2 r}\frac{ -\phi_r+s\phi_{rs}+r\phi_{ss}}{\phi-s\phi_s+(r^2-s^2)\phi_{ss}}, \quad P:=-\frac{Q}{\phi}(s\phi +(r^2-s^2)\phi_s)+\frac{1}{2r\phi}(s\phi_r+r\phi_s).
\end{equation}

The components $G^i_{jk\ell}$ of the Berwald curvature are defined by
$$G^i_{jk\ell}=\frac{\partial^2 }{\partial y^\ell \partial y^k\partial y^j} G^i.$$
For a spherically symmetric Finsler metric $F=u\ \phi(r,s)$,  the components $G^i_{jk\ell}$ are calculated as follows 
\begingroup
\allowdisplaybreaks
\begin{align}
\label{Eq:G^i_{jkl}}
   \nonumber     G^i_{jk\ell}= & \frac{P_{ss}}{u}(\delta^i_j x_k x_\ell+\delta^i_\ell x_j x_k+\delta^i_k x_j x_\ell) +\frac{1}{u}(P-sP_s)(\delta^i_j \delta_{k\ell}+\delta^i_k \delta_{j\ell}+\delta^i_\ell \delta_{jk})\\
     \nonumber    &-\frac{sP_{ss}}{u^2}\left(\delta^i_j\left(x_k y_\ell+x_\ell y_k\right)+\delta^i_k\left(x_j y_\ell+x_\ell y_j\right)+\delta^i_\ell\left(x_j y_k+x_k y_j\right)\right) \\
     \nonumber    &-\frac{sP_{ss}}{u^2}\left(\delta_{j k} x_\ell+\delta_{j \ell} x_k+\delta_{k \ell} x_j\right) y^i
        +\frac{1}{u}(Q_s-sQ_{ss})\left(\delta_{j k} x_\ell+\delta_{j \ell} x_k+\delta_{k \ell} x_j\right)x^i \\
    \nonumber     &+\frac{1}{u^3}(s^2P_{ss}+sP_s-P)\left(\left(\delta^i_j y_k y_\ell+\delta^i_k y_j y_\ell+\delta^i_\ell y_j y_k\right)+\left(\delta_{j k} y_\ell+\delta_{j \ell} y_k+\delta_{k \ell} y_j\right) y^i\right)\\
        &+\frac{1}{u^5}(3P-s^3P_{sss}-6s^2P_{ss}-3sP_s) y_jy_ky_\ell y^i+\frac{P_{sss}}{u^2} x_jx_kx_\ell y^i\\
   \nonumber      &+\frac{1}{u^4}(s^2P_{sss}+3sP_{ss})\left(y_j y_k x_\ell+y_j y_\ell x_k+y_k y_\ell x_j\right)y^i\\
   \nonumber      &-\frac{1}{u^3}(P_{ss}+sP_{sss})\left(y_j x_k x_\ell+y_k x_j x_\ell+y_\ell x_j x_k\right)y^i+\frac{Q_{sss}}{u}x_jx_kx_\ell x^i\\
    \nonumber     &+\frac{1}{u^3}(s^2Q_{sss}+sQ_{ss}-Q_s)\left(x_j y_k y_\ell+x_k y_j y_\ell+x_\ell y_j y_k\right)x^i\\
    \nonumber  &
        -\frac{sQ_{sss}}{u^2}(x_j x_\ell y_k+x_j x_k y_\ell+x_k x_\ell y_j)x^i
        +\frac{1}{u^4}  (3sQ_s-3s^2Q_{ss}-s^3Q_{sss})y_j y_k y_\ell x^i\\
   \nonumber      &+\frac{1}{u^2}(s^2Q_{ss}-sQ_s)(\delta_{k\ell }y_j+\delta_{j\ell }y_k+\delta_{kj }y_\ell )x^i.
\end{align}
\endgroup

The components $E_{ij}:=G^h_{ijh}$ of the mean Berwald curvature is given by  
\begin{equation}
\label{Mean_curv.}
 \begin{split}
E_{ij} =&\frac{1}{u}((n+1)(P-sP_s)+(r^2-s^2)(Q_s-sQ_{ss}))\delta_{ij}+ \frac{1}{u^3}((n+1)(s^2P_{ss}+sP_s-P)\\
&+r^2(s^2Q_{sss}+sQ_{ss}-Q_s)+3s^2Q_s-3s^3Q_{ss}-s^4Q_{sss})  y_i y_j\\
&+\frac{1}{u}((n+1)P_{ss}+2(Q_s-sQ_{ss})+(r^2-s^2)Q_{sss})x_ix_j\\
&-\frac{s}{u^2}((n+1)P_{ss}+2(Q_s-sQ_{ss})+(r^2-s^2)Q_{sss})(x_i y_j+x_j y_i)  .
\end{split}
\end{equation}
One can rewrite  $E_{ij}$ as follows 
\begin{equation} 
\label{E_H}
E_{ij}=\frac{H}{u}\delta_{ij}-\frac{1}{u^3}(sH_s+H) y_iy_j+\frac{H_s}{su^2}(s(x_i y_j+x_j y_i)-u x_ix_j),
\end{equation}
where 
 $$H:=(n+1)(P-sP_s)+(r^2-s^2)(Q_s-sQ_{ss}).$$
 
 In what follow, we recall some results of \cite{Elgendi-SSM}.
 
 \begin{proposition}\cite{Elgendi-SSM}
Let $P(r,s)$ and $Q(r,s)$ be given, then the Finsler function $F=u\phi(r,s)$ whose geodesic spray given by $P$ and $Q$ is determined by the function $\phi$ provided that $\phi$ satisfies the following   compatibility conditions:
\begin{equation}
\label{Comp_C_C_2}
    \begin{split}
       C_1:= &(1+sP-(r^2-s^2)(2Q-sQ_s))\phi_s+(s P_s-2P-s(2Q-sQ_s))\phi =0,   \\
        C_2:=& \frac{1}{r}\phi_r-(P+Q_s(r^2-s^2))\phi_s-(P_s+sQ_s) \phi =0.
    \end{split}
\end{equation}
\end{proposition}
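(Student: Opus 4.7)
The plan is to derive the two compatibility conditions by purely algebraic manipulation of the defining relations (2.4) for $P$ and $Q$. The key observation is that clearing denominators in the $P$-equation produces only a \emph{first-order} PDE for $\phi$,
\[
(\mathrm{I})\qquad \bigl(1-2(r^2-s^2)Q\bigr)\phi_s + \tfrac{s}{r}\phi_r - 2(P+sQ)\phi \;=\; 0,
\]
while the $Q$-equation yields a \emph{second-order} PDE,
\[
(\mathrm{II})\qquad s\phi_{rs} - \phi_r + r\bigl(1-2(r^2-s^2)Q\bigr)\phi_{ss} + 2rsQ\phi_s - 2rQ\phi \;=\; 0.
\]
Both follow immediately from (2.4) by multiplying through by the relevant denominators and collecting terms.

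Next I would differentiate (I) with respect to $s$. The resulting equation contains $\phi_{rs}$ with coefficient $s/r$ and $\phi_{ss}$ with coefficient $1-2(r^2-s^2)Q$, which are precisely the coefficients of those terms in $\tfrac{1}{r}(\mathrm{II})$. Forming the combination $\partial_s(\mathrm{I}) - \tfrac{1}{r}(\mathrm{II})$ therefore eliminates both second-derivative terms simultaneously and leaves a first-order PDE. A direct computation of the surviving coefficients of $\phi$, $\phi_s$, $\phi_r$ produces $-2(P_s+sQ_s)$, $-2(P+(r^2-s^2)Q_s)$, and $2/r$ respectively; dividing by $2$ then yields exactly $C_2 = 0$.

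Finally, $C_1$ emerges as a consistency relation between (I) and $C_2$: both equations now give explicit expressions for $\tfrac{1}{r}\phi_r$ as an affine combination of $\phi$ and $\phi_s$. Equating these expressions cancels $\phi_r$ and, after multiplying through by $s$ and regrouping, produces precisely $C_1 = 0$; the key simplification is that the $\phi$-side becomes $2P + 2sQ - sP_s - s^2Q_s = -(sP_s - 2P - s(2Q-sQ_s))$, which matches the $\phi$-coefficient of $C_1$, while the $\phi_s$-side assembles into $1 + sP - (r^2-s^2)(2Q-sQ_s)$. The main obstacle I anticipate is purely computational: the combination $\partial_s(\mathrm{I}) - \tfrac{1}{r}(\mathrm{II})$ involves many terms, and careful bookkeeping of signs is needed both to confirm that the $\phi_{ss}$ and $\phi_{rs}$ cancellations occur exactly and to verify that the surviving coefficients match those of $C_2$ without extraneous factors; the subsequent elimination of $\phi_r$ to produce $C_1$ is straightforward once $C_2$ is in hand.
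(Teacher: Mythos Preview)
The paper does not actually prove this proposition here: it is stated as a recalled result from \cite{Elgendi-SSM}, so there is no in-paper proof to compare against. Your derivation, however, is correct. Clearing denominators in the $P$-formula of \eqref{P,Q} does give your first-order relation $(\mathrm{I})$, and the $Q$-formula gives the second-order relation $(\mathrm{II})$; the combination $\partial_s(\mathrm{I})-\tfrac{1}{r}(\mathrm{II})$ cancels $\phi_{ss}$ and $\phi_{rs}$ exactly as you claim, and the remaining coefficients are $\tfrac{2}{r}$, $-2(P+(r^2-s^2)Q_s)$, $-2(P_s+sQ_s)$, which after halving is precisely $C_2$. Then $s\,C_2-(\mathrm{I})$ eliminates $\phi_r$ and yields $-C_1$, with the $\phi_s$-coefficient $-(1+sP-(r^2-s^2)(2Q-sQ_s))$ and the $\phi$-coefficient $-(sP_s-2P-s(2Q-sQ_s))$, matching \eqref{Comp_C_C_2}. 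The only caveat is that your last step, as written, divides $(\mathrm{I})$ by $s$ to isolate $\tfrac{1}{r}\phi_r$; it is cleaner (and avoids the apparent singularity at $s=0$) to instead form $s\,C_2-(\mathrm{I})$ directly, which gives $C_1=0$ without any division.
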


\begin{theorem}\cite{Elgendi-SSM}\label{Theorem_A}
A Landsberg spherically symmetric Finsler metric of dimension $n\geq 3$ is either Riemannian or the geodesic  spray is determined  by the functions 
 \begin{equation}
\label{Zhou_P&Q}
P=c_1 s+\frac{c_2}{r^2}\sqrt{r^2-s^2}  , \quad Q=\frac{1}{2}c_0 s^2-\frac{c_2 s}{r^4}\sqrt{r^2-s^2}+c_3,
\end{equation}
where $c_0$, $c_1$, $c_2$, $c_3$ are arbitrary functions of $r$.  
\end{theorem}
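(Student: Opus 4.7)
The plan is to turn the Landsberg condition $L_{jk\ell}=0$ into a finite scalar system for $P(r,s)$ and $Q(r,s)$ and then integrate it. Starting from the well-known identity $L_{jk\ell}=\tfrac{1}{2}\,y_i\,G^i_{jk\ell}$ (with $y_i=\delta_{ih}y^h$), I would substitute the explicit Berwald curvature \eqref{Eq:G^i_{jkl}} and expand. The result expresses $L_{jk\ell}$ as a linear combination of the six irreducible symmetric $(0,3)$-tensors
\[
\delta_{(jk}x_{\ell)},\qquad \delta_{(jk}y_{\ell)},\qquad x_jx_kx_\ell,\qquad y_jy_ky_\ell,\qquad x_{(j}x_ky_{\ell)},\qquad y_{(j}y_kx_{\ell)}
\]
built from $\delta_{ij}$, $x_i$ and $y_i$, whose scalar coefficients are polynomials in $P,P_s,P_{ss},P_{sss},Q_s,Q_{ss},Q_{sss}$ (together with $\phi,\phi_s,\phi_{ss}$ through the metric components).

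The essential structural input is the dimension hypothesis $n\ge 3$. For a generic point $x\in\mathbb{B}^n(r_0)$ and a generic $y$ linearly independent from $x$, the six tensor structures above are linearly independent in the space of symmetric $(0,3)$-tensors on $\mathbb{R}^n$, so $L_{jk\ell}=0$ forces each scalar coefficient to vanish separately. (This independence fails precisely in dimension two, which is why the two-dimensional case must be handled by the distinct argument supplied in the current paper.) The resulting scalar equations, combined with the compatibility conditions \eqref{Comp_C_C_2} relating $\phi$ to $(P,Q)$, constitute a closed system of PDEs in $(r,s)$.

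I would then dichotomize. One of the coefficients, after reducing $\phi$-derivatives via \eqref{Comp_C_C_2}, factors as $\phi_{ss}\cdot(\text{something})$. If $\phi_{ss}\equiv 0$ then $\phi$ is affine in $s$, and combined with the Finslerian positivity and \eqref{Comp_C_C_2} this forces $\phi^{2}=A(r)+B(r)s^{2}$, i.e.\ $F$ is Riemannian. In the complementary branch $\phi_{ss}\not\equiv 0$, the remaining scalar equations reduce, after one differentiation in $s$, to linear ODEs of the form $(r^{2}-s^{2})f_{ss}+sf_s-f=0$ (for $P$) and a similar third-order ODE for $Q$, whose fundamental solutions in $s$ (with $r$ a parameter) are $s$, $\sqrt{r^{2}-s^{2}}$, and for $Q$ additionally $1$ and $\tfrac{1}{2}s^{2}$. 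Naming the four $r$-dependent integration ``constants'' $c_0(r),c_1(r),c_2(r),c_3(r)$ gives exactly the form \eqref{Zhou_P&Q}.

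The main obstacle is the first step: organizing the long expression $y_i G^i_{jk\ell}$ into its six irreducible tensor components cleanly, and then verifying that no hidden linear dependence among the corresponding scalar equations (beyond what \eqref{Comp_C_C_2} already implies) reduces the effective rank of the system. Once this decomposition is carried out and \eqref{Comp_C_C_2} is used to eliminate $\phi$-derivatives in favour of $P$ and $Q$, the case analysis and integration in the $s$-variable are routine.
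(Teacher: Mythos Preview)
First, note that this paper does not itself prove Theorem~\ref{Theorem_A}: the result is quoted from \cite{Elgendi-SSM} as background, so there is no in-paper argument to compare your proposal against directly.

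That said, your outline has a concrete error at the starting point. The identity $L_{jk\ell}=\tfrac12\,y_i\,G^{i}_{jk\ell}$ (up to sign) holds only when $y_i$ is the \emph{Finslerian} covector $g_{ih}y^{h}=F\,\partial_{y^i}F$, not the Euclidean $y_i=\delta_{ih}y^{h}$ you specify; the paper explicitly warns, just after the formula for $g_{ij}$, that in its conventions $y_i\neq F\,\partial_{y^i}F$. For $F=u\phi$ one computes $F\,\partial_{y^i}F=\phi(\phi-s\phi_s)\,y_i+u\,\phi\phi_s\,x_i$, so the Landsberg tensor is a $\phi$-- and $\phi_s$--weighted combination of \emph{both} contractions $y_iG^{i}_{jk\ell}$ and $x_iG^{i}_{jk\ell}$. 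Hence each scalar coefficient in your six-tensor decomposition is genuinely linear in $\phi$ and $\phi_s$ (compare the surface formula in Proposition~\ref{2D_L_ijk=0}); you do not get a closed ODE system in $P,Q$ alone until the compatibility relations \eqref{Comp_C_C_2} are used to eliminate $\phi_s/\phi$, and after that elimination the equations are not the tidy linear ODE $(r^{2}-s^{2})f_{ss}+sf_s-f=0$ you describe. Your dichotomy is also misplaced: $\phi_{ss}\equiv0$ gives $\phi=a(r)+b(r)s$, i.e.\ a Randers form, and this does \emph{not} imply $\phi^{2}=A(r)+B(r)s^{2}$. As the Berwald analogue worked out in Theorem~\ref{Th_Riemannian} suggests, the Riemannian branch in the actual argument arises instead when a certain leading coefficient in the reduced system is nonzero, allowing one to solve algebraically for $\phi_s/\phi$ and integrate to a quadratic $F^{2}$; the non-Riemannian branch \eqref{Zhou_P&Q} is the complementary case.
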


 \begin{theorem} \cite{Elgendi-SSM}
  \label{First_surface_result}
A spherically symmetric Finsler surface is Berwaldian if and only if 
$$P=b_1 s+ \frac{b_2}{ \sqrt{r^2-s^2}} +\frac{b_3 (r^2-2s^2)}{\sqrt{r^2-s^2}} ,$$
$$ Q= b_0 s^2+\frac{1}{2} b_1+ \frac{b_2 s (r^2-2 s^2)}{r^4 \sqrt{r^2-s^2}} -\frac{b_3 s (3r^2-2s^2)}{r^2\sqrt{r^2-s^2}}-\frac{a  }{r^2} s \sqrt{r^2-s^2},$$
where $a$, $b_0$, $b_1$, $b_2$, $b_3$ are arbitrary functions of $r$ and to be chosen such  that the  compatibility conditions are satisfied.
\end{theorem}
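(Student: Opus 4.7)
The plan is to translate the Berwald condition $G^{i}_{jk\ell}=0$ into a system of linear ordinary differential equations in $s$ (with $r$ treated as a parameter) on the unknown functions $P(r,s)$ and $Q(r,s)$, and then to integrate that system. The starting point is the explicit formula~(2.4), which expresses $G^{i}_{jk\ell}$ as a sum of tensorial monomials built from $\delta^{i}_{j}$, $x^{i}$, $y^{i}$, $x_{j}$, $y_{j}$ with coefficients involving $P$, $Q$ and their $s$-derivatives up to order three.

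The critical feature of dimension $n=2$ is that $\{x,y\}$ spans the whole tangent space (wherever $x$ and $y$ are linearly independent), which forces the identity
\[
(r^{2}-s^{2})\,\delta_{ij}=\frac{r^{2}}{u^{2}}\,y_{i}y_{j}+x_{i}x_{j}-\frac{s}{u}\bigl(x_{i}y_{j}+x_{j}y_{i}\bigr),
\]
together with analogous relations at higher symmetric rank. Substituting these into the formula for $G^{i}_{jk\ell}$ collapses the many tensor monomials to a reduced independent basis, so that the condition $G^{i}_{jk\ell}=0$ becomes a finite list of scalar ODEs in $s$ of order at most three, linear in $P$, $Q$ and their derivatives. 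One cannot short-cut via the weakly Berwald condition $E_{ij}=0$: from~(2.7) one checks that in dimension two this yields only $Hs-(r^{2}-s^{2})H_{s}=0$, i.e., $H=g(r)/\sqrt{r^{2}-s^{2}}$ for some function $g$, which is strictly weaker than Berwald; so the full Berwald tensor has to be exploited. Integrating the resulting system should produce a fundamental set of solutions spanned, for $P$, by $s$, $1/\sqrt{r^{2}-s^{2}}$, and $(r^{2}-2s^{2})/\sqrt{r^{2}-s^{2}}$, with $Q$ determined from $P$ through the coupling between the two equations up to the two extra integration functions $a$ and $b_{0}$; this matches the formulas stated in the theorem.

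I expect the main obstacle to be the algebraic bookkeeping in the reduction step. One has to be careful, when applying the dimension-two identity, neither to introduce spurious constraints by treating dependent tensor monomials as independent, nor to miss genuine constraints hidden in algebraic combinations that only become visible after the identity is used. Once the ODE system is cleanly isolated, its integration is essentially routine, and sufficiency is verified by direct substitution of the claimed $P$ and $Q$ into~(2.4). The compatibility conditions~(2.6) are then imposed separately on $a,b_{0},b_{1},b_{2},b_{3}$ to guarantee that the resulting spray arises from a genuine Finsler function $\phi$.
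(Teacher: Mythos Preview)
This theorem is not proved in the present paper: it is quoted verbatim from the author's earlier work~\cite{Elgendi-SSM} (where it is Theorem~6.3), so there is no in-paper argument to compare against. Your strategy---use the two-dimensional identity
\[
(r^{2}-s^{2})\,\delta_{ij}=\frac{r^{2}}{u^{2}}\,y_{i}y_{j}+x_{i}x_{j}-\frac{s}{u}\bigl(x_{i}y_{j}+x_{j}y_{i}\bigr)
\]
to collapse the tensor monomials in~\eqref{Eq:G^i_{jkl}} onto the rank-one basis $z_{j}z_{k}z_{\ell}$ (with $z_{j}=x_{j}-\tfrac{s}{u}y_{j}$), read off the two scalar coefficients of $y^{i}$ and $x^{i}$, and integrate the resulting third-order linear ODEs in $s$---is the natural approach and almost certainly coincides with what~\cite{Elgendi-SSM} does.

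One remark on your aside about $E_{ij}=0$. You are right that a priori it yields only the single equation $sH-(r^{2}-s^{2})H_{s}=0$, which is one condition on two unknowns and hence cannot by itself pin down both $P$ and $Q$; so your decision to work with the full Berwald tensor is logically necessary. Note, however, that Proposition~\ref{Berwald_surface} in the present paper asserts that for spherically symmetric surfaces this single equation is in fact \emph{equivalent} to the Berwald condition---but its proof invokes precisely the result you are trying to establish (via ``the proof of Theorem~6.3 in~\cite{Elgendi-SSM}''), so it cannot be used as a shortcut here. After you carry out your argument, that equivalence becomes an interesting corollary rather than an input.
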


\begin{proposition}\cite{Elgendi-SSM}\label{2D_L_ijk=0}
A spherically symmetric surface is Landsbergian  if and only if $ (r^2-s^2)L_1+3L_2=0$, that is,
\begin{equation}
\label{Main_Lands_surface_cond}
\begin{split}
(r^2-s^2)L_1+3L_2=&((r^2-s^2)((r^2-s^2)Q_{sss}+3(Q_s-sQ_{ss})+3P_{ss})+3(P-sP_s))\phi_s\\
&+((r^2-s^2)(sQ_{sss}+P_{sss})+3s(Q_s-sQ_{ss})-3sP_{ss})\phi\\
=&\frac{1}{s}(sH-(r^2-s^2)H_s) \phi_s+((r^2-s^2)K_s-3sK)\phi=0,
\end{split}
\end{equation}
where
$  K:=P_{ss}-Q_s+sQ_{ss}.$
 \end{proposition}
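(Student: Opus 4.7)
The plan is to exploit the two-dimensionality of the tangent spaces to reduce the tensorial Landsberg equation $L_{ijk} = 0$ to a single scalar condition, and then identify that scalar with the displayed formulas. For a spherically symmetric metric of arbitrary dimension $n$, direct computation of $L_{ijk} = -\tfrac{1}{2}\, y_m\, G^m_{ijk}$ from (2.1) and (2.5), together with the angular identity $L_{ijk}\,y^i = 0$, yields a decomposition of the form
\[
L_{ijk} = L_1\, T^{(1)}_{ijk} + L_2\, T^{(2)}_{ijk},
\]
where $L_1, L_2$ are the scalar functions of $(r,s)$ defined in \cite{Elgendi-SSM} and $T^{(1)}, T^{(2)}$ are two fully symmetric rank-three tensors built out of $\delta_{ij}, x_i, y_i$. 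For $n \ge 3$ these two structures are linearly independent, so one recovers the conditions $L_1 = 0$ and $L_2 = 0$ separately; in $n = 2$ they must be linearly dependent at each point, so only one linear combination of $L_1, L_2$ can be non-trivial.

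To identify that linear combination I would use the vector
\[
m^i := u\,x^i - s\,y^i, \qquad m^i y_i = 0, \qquad m^i m_i = u^2(r^2 - s^2),
\]
which spans the Euclidean-orthogonal complement of $y^i$ inside $\mathbb{R}^2$. Since $L_{ijk}\, y^i = 0$ in every slot, $L_{ijk}$ is necessarily a scalar multiple of $m_i m_j m_k$, and so $L_{ijk} = 0$ is equivalent to the single condition $L_{ijk}\, m^i m^j m^k = 0$. Plugging the decomposition into this contraction produces
\[
L_{ijk}\, m^i m^j m^k \;=\; \kappa\,\bigl((r^2 - s^2)\,L_1 + 3\,L_2\bigr),
\]
for a non-vanishing factor $\kappa$; here the $(r^2 - s^2)$ comes from $|m|^2 = u^2(r^2-s^2)$ weighting the $T^{(1)}$-contribution, while the $3$ counts the three symmetrized terms in $T^{(2)}$. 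This establishes the first equivalence $L_{ijk} = 0 \Leftrightarrow (r^2 - s^2)L_1 + 3 L_2 = 0$.

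The middle displayed formula then follows by substituting the explicit expressions of $L_1, L_2$ from \cite{Elgendi-SSM} into $(r^2-s^2)L_1 + 3L_2$ and collecting $\phi$- and $\phi_s$-terms. The rewriting in terms of $H$ and $K$ is a short algebraic check: with $n = 2$, $H = 3(P - sP_s) + (r^2 - s^2)(Q_s - sQ_{ss})$, so a direct differentiation yields
\[
\tfrac{1}{s}\bigl(sH - (r^2 - s^2)H_s\bigr) = 3(P - sP_s) + (r^2 - s^2)\bigl((r^2 - s^2)Q_{sss} + 3(Q_s - sQ_{ss}) + 3P_{ss}\bigr),
\]
matching the $\phi_s$-coefficient; while $K_s = P_{sss} + sQ_{sss}$ gives $(r^2 - s^2)K_s - 3sK = (r^2 - s^2)(P_{sss} + sQ_{sss}) + 3s(Q_s - sQ_{ss}) - 3sP_{ss}$, matching the $\phi$-coefficient. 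The main obstacle is the first step: extracting the scalars $L_1, L_2$ cleanly from the long Berwald-curvature formula (2.5) requires careful tracking of the many symmetrized terms and of the cancellations forced by $L_{ijk}\,y^i = 0$; once that decomposition is in hand, the rest of the argument is essentially a symmetry reduction followed by algebraic verification.
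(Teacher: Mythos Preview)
The paper does not give a proof of this proposition here: it is stated with the citation \cite{Elgendi-SSM} and recalled verbatim from that earlier paper, with no argument supplied in the present manuscript. So there is no ``paper's own proof'' to compare against in this document.

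That said, your sketch is a coherent and correct outline of how the result is obtained. The reduction step---using that in dimension two any totally symmetric $3$-tensor annihilated by $y^i$ must be a scalar multiple of $m_i m_j m_k$ with $m^i = u\,x^i - s\,y^i$---is the right mechanism, and your identification of the weights $(r^2-s^2)$ and $3$ from $|m|^2$ and the symmetrization count is accurate. Your algebraic checks for the $H$- and $K$-rewriting are also correct: with $n=2$ one has $H_s = -3sP_{ss} - 2s(Q_s - sQ_{ss}) - s(r^2-s^2)Q_{sss}$, from which $\tfrac{1}{s}(sH-(r^2-s^2)H_s)$ indeed collapses to the displayed $\phi_s$-coefficient, and $K_s = P_{sss} + sQ_{sss}$ gives the $\phi$-coefficient immediately. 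The only point you flag as an obstacle---extracting $L_1,L_2$ from the long formula (2.5)---is exactly the computation carried out in \cite{Elgendi-SSM}, so your proposal aligns with the intended route even though you cannot see it here.
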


  We end this section by proving the following result.
  \begin{proposition}\label{Berwald_surface}
  A spherically symmetric Finsler surface $F=u\phi$ is Berwaldian if and only if 
  $$sH-(r^2-s^2)H_s=0.$$
  \end{proposition}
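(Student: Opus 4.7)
The argument has two directions, and the key simplification is that in dimension two the mean Berwald tensor $E_{ij}$ is determined by a single scalar.

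\textbf{Necessity.} Suppose $F$ is Berwaldian. Then $G^i_{jk\ell}=0$ and hence the mean Berwald tensor $E_{ij}=G^h_{ijh}$ vanishes. Pick a unit vector $m\in\mathbb{R}^2$ orthogonal to $y$ in the Euclidean sense. Decomposing $x=(s/u)y+x_\perp$ gives $x_\perp=\pm\sqrt{r^2-s^2}\,m$, so $x_im^i=\sqrt{r^2-s^2}$ while $y_im^i=0$ and $|m|=1$. Contracting \eqref{E_H} with $m^im^j$ kills the $y_iy_j$ and $(x_iy_j+x_jy_i)$ contributions, leaving
$$0=E_{ij}m^im^j=\frac{H}{u}-\frac{(r^2-s^2)H_s}{su}=\frac{sH-(r^2-s^2)H_s}{su},$$
which gives $sH-(r^2-s^2)H_s=0$.

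\textbf{Sufficiency.} Conversely, assume $sH-(r^2-s^2)H_s=0$. The converse of the above computation, together with the identity $E_{ij}y^j=0$ read off from \eqref{E_H} via $y_iy^i=u^2$ and $x_iy^i=su$, shows that $E_{ij}$ vanishes on the basis $\{y,m\}$ of $\mathbb{R}^2$ and so $E_{ij}\equiv 0$. It remains to upgrade $E_{ij}=0$ to $G^i_{jk\ell}=0$. In the Berwald frame $\{\ell,m\}$ with $\ell=y/u$, the homogeneity identity $G^i_{jk\ell}y^\ell=0$ restricts the potentially nonzero components of $G^i_{jk\ell}$ to the two scalars $G^m_{mmm}$ and $G^\ell_{mmm}$; tracing yields $G^m_{mmm}=E_{mm}$, already zero. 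For the remaining component I would contract the explicit formula \eqref{Eq:G^i_{jkl}} against $(y_i/u)\,m^jm^km^\ell$: every $\delta^i_\cdot$-factor contracts with $y_i/u$ to give $y_\cdot/u$, annihilating against $m^\cdot$, and every tensor monomial with a lower $y_\cdot$ paired with $m^\cdot$ also vanishes. Exactly four terms survive, combining into
$$G^\ell_{mmm}=\frac{\sqrt{r^2-s^2}}{u}\bigl[(r^2-s^2)K_s-3sK\bigr],\qquad K=P_{ss}-Q_s+sQ_{ss}.$$
To conclude $G^\ell_{mmm}=0$ I would invoke the compatibility conditions \eqref{Comp_C_C_2}, which encode that $P,Q$ arise from the geodesic spray of a genuine Finsler metric $\phi$. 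Eliminating $\phi_{rs}$ between $\partial_rC_1=0$ and $\partial_sC_2=0$ produces a differential identity in $P,Q$ alone, which combined with the hypothesis $sH-(r^2-s^2)H_s=0$ forces $(r^2-s^2)K_s-3sK=0$. Therefore $G^\ell_{mmm}=0$, so $G^i_{jk\ell}=0$ and $F$ is Berwaldian.

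The main obstacle is this last step: the two scalars $sH-(r^2-s^2)H_s$ and $(r^2-s^2)K_s-3sK$ are in general independent functions of $P,Q$, so the implication hinges on $P,Q$ coming from a genuine Finsler metric via \eqref{P,Q} — equivalently, on the compatibility conditions. This coupling is the precise feature of the spherically-symmetric ansatz $G^i=uPy^i+u^2Qx^i$ that allows the full 2D Berwald condition to be captured by a single scalar equation.
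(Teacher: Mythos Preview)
Your necessity argument is correct and makes explicit what the paper obtains by citing \cite[Proposition~6.1]{Elgendi-SSM}: contracting \eqref{E_H} with the Euclidean unit vector $m\perp y$ isolates exactly $sH-(r^2-s^2)H_s$. For sufficiency your strategy is genuinely different from the paper's. The paper does not analyse the Berwald tensor component by component; instead it invokes the proof of Theorem~6.3 of \cite{Elgendi-SSM} to conclude that the single equation $sH-(r^2-s^2)H_s=0$ already forces $P,Q$ into the explicit five-parameter form of Theorem~\ref{First_surface_result}, which is Berwaldian by that theorem. Your reduction of the two-dimensional Berwald condition to the two scalars $G^m_{mmm}=E_{mm}$ and $G^\ell_{mmm}$, together with the computation
\[
\tfrac{y_i}{u}\,G^i_{jk\ell}\,m^jm^km^\ell=\pm\tfrac{\sqrt{r^2-s^2}}{u}\bigl[(r^2-s^2)K_s-3sK\bigr],
\]
is correct (only the four terms with factors $y^i$ or $x^i$ and no lower $y$-index survive, and they combine exactly as you say).

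The genuine gap is the last step. Your claim that ``eliminating $\phi_{rs}$ between $\partial_rC_1=0$ and $\partial_sC_2=0$ produces a differential identity in $P,Q$ alone'' is not correct as written: $\partial_rC_1$ still contains $\phi,\phi_r,\phi_s$ and $\partial_sC_2$ still contains $\phi,\phi_s,\phi_{ss}$, so one elimination does not remove the metric. What one \emph{can} extract is the integrability condition $(\ln\phi)_{rs}=(\ln\phi)_{sr}$ after using $C_1=0$, $C_2=0$ to express $\phi_s/\phi$ and $\phi_r/\phi$ in terms of $P,Q$; that is indeed a relation purely in $P,Q$ and their derivatives. But you have not shown that this integrability relation, combined with $sH-(r^2-s^2)H_s=0$, forces $(r^2-s^2)K_s-3sK=0$. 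Note that $sH-(r^2-s^2)H_s=0$ says only $H=C(r)/\sqrt{r^2-s^2}$, a single constraint on the pair $(P,Q)$, so $K$ is a priori free; the missing ingredient is precisely the concrete use of compatibility that you yourself flag as ``the main obstacle''. Until that computation is actually carried out (or replaced by the paper's appeal to the explicit solution in \cite{Elgendi-SSM}), the sufficiency direction is not proved.
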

  \begin{proof}
  Let $F=u\phi$  be a Berwaldian spherically symmetric Finsler surface, then the Berwald curvature  vanishes. Then, we conclude that    the mean curvature $E_{ij}$ vanishes as well. Now, by \cite[Proposition 6.1]{Elgendi-SSM}, we obtain that $sH-(r^2-s^2)H_s=0.$
  
  Conversely, assume that  $sH-(r^2-s^2)H_s=0$. Then, by the proof of Theorem 6.3 in \cite{Elgendi-SSM}, the functions $P$ and $Q$ are given by Theorem \ref{First_surface_result}. Consequently, $F$ is Berwaldain. 
  \end{proof}
\section{Berwald case}

The following theorem is a modified version of \cite[Theorem 5.4]{Elgendi-SSM}.

\begin{theorem}\label{Th_Riemannian}
All   Berwaldian spherically symmetric metrics of dimension $n\geq3$ are Riemannian or the function $\phi$ is given by
$$
\phi= s\ \psi\left(\frac{s^2}{g(r)+s^2 \int 4 r c_0(r) g(r) d r}\right) e^{-\int\left(\frac{2}{r}-2 r^3 c_0(r)\right) dr} 
$$
where $c_0(r)$ is a smooth function and $g(r)=e^{\int\left(\frac{2}{r}-4 r^3 c_0(r)\right) dr}$.
\end{theorem}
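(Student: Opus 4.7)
The plan is to exploit the fact that every Berwald metric is Landsbergian, so Theorem \ref{Theorem_A} pins down the geodesic spray in dimension $n\geq 3$ up to four functions $c_0,c_1,c_2,c_3$ of $r$. The Berwald hypothesis $G^i_{jk\ell}=0$ then refines this spray further, and the two compatibility conditions \eqref{Comp_C_C_2} determine $\phi$ from $(P,Q)$.

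The first step is to use the Berwald condition to eliminate $c_2$. Since $G^i=uPy^i+u^2Qx^i$ must be a quadratic polynomial in $y$, and since $u\sqrt{r^2-s^2}=\sqrt{r^2|y|^2-\langle x,y\rangle^2}$ is not polynomial in $y$, isolating the $\sqrt{r^2-s^2}$ contributions coming from Theorem \ref{Theorem_A} produces a term proportional to $c_2\sqrt{r^2|y|^2-\langle x,y\rangle^2}\,(r^2 y^i-\langle x,y\rangle x^i)$ in $G^i$. The vectorial polynomial $r^2 y^i-\langle x,y\rangle x^i$ does not vanish identically (take $y\perp x$), so we must have $c_2\equiv 0$. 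The remaining spray $G^i=c_1\langle x,y\rangle y^i+\bigl(\tfrac{1}{2}c_0\langle x,y\rangle^2+c_3|y|^2\bigr)x^i$ is manifestly quadratic and automatically satisfies $G^i_{jk\ell}=0$.

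Substituting $P=c_1 s$ and $Q=\tfrac{1}{2}c_0 s^2+c_3$ into $C_1$ of \eqref{Comp_C_C_2} yields
$$\bigl[(1-2c_3 r^2)+(c_1+2c_3)s^2\bigr]\phi_s-(c_1+2c_3)\,s\,\phi=0.$$
I then perform case analysis on the bracketed coefficient. If $c_1+2c_3\not\equiv 0$, integration in $s$ gives $\phi^2=h(r)^2\bigl[(1-2c_3 r^2)+(c_1+2c_3)s^2\bigr]$, so $F^2$ is a quadratic form in $y$ and the metric is Riemannian. If $c_1+2c_3\equiv 0$ but $1-2c_3 r^2\not\equiv 0$, the equation collapses to $\phi_s=0$, giving $\phi=\phi(r)$, again Riemannian. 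The only remaining possibility is $c_1+2c_3\equiv 0$ together with $1-2c_3 r^2\equiv 0$; this forces $c_3=1/(2r^2)$ and $c_1=-1/r^2$ identically, leaves $c_0(r)$ entirely free, and renders $C_1$ vacuous.

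In this non-Riemannian case the condition $C_2$ becomes the linear first-order PDE
$$\phi_r+\Bigl(\tfrac{s}{r}-rc_0 s(r^2-s^2)\Bigr)\phi_s+\Bigl(\tfrac{1}{r}-rc_0 s^2\Bigr)\phi=0,$$
which I would solve by characteristics. The factorised ansatz $\phi=s\,A(r)\,\psi(\xi(r,s))$ splits the problem into choosing $A(r)$ to kill the zeroth-order term and identifying an invariant $\xi$ of the characteristic ODE. The first step determines $A$ as an elementary exponential in $c_0$ and $r$. The second, rewritten via $t=s^2$, becomes a Bernoulli equation of the form $dt/dr-\lambda(r,c_0)t=\mu(r,c_0)t^2$, which linearises under $v=1/t$ and integrates explicitly to an invariant of the form $s^2/\bigl(g(r)+s^2\int(\cdots)\,dr\bigr)$ with $g(r)$ itself an exponential in $c_0$ and $r$. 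Matching the resulting factors against the stated expressions for $\xi$, $g$, and the exponential prefactor finishes the proof, with $\psi$ appearing as the arbitrary function along characteristics. The main technical obstacle is the bookkeeping in this Bernoulli linearisation and in choosing $A$ so that $\xi$ and the prefactor combine into the exact form displayed in the statement; once the case reduction from $C_1$ is done, everything else follows from direct substitution.
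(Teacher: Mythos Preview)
Your proposal is correct and follows the same overall architecture as the paper: invoke the Landsberg classification (Theorem~\ref{Theorem_A}) to pin the spray down to $c_0,c_1,c_2,c_3$, use the Berwald hypothesis to force $c_2=0$, then run the case split on $C_1$ to isolate either the Riemannian case or the special values $c_1=-1/r^2$, $c_3=1/(2r^2)$, and finally solve $C_2$ for $\phi$.

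Two sub-steps are handled differently, and yours are arguably cleaner. For $c_2=0$, the paper computes the mean Berwald curvature $E_{ij}$ from \eqref{Mean_curv.}, sets it to zero, and contracts with $\delta^{ij}$ to obtain $n(n-2)c_2\sqrt{r^2-s^2}=0$; your polynomiality argument (Berwald $\Rightarrow$ $G^i$ quadratic in $y$, and $\sqrt{r^2|y|^2-\langle x,y\rangle^2}$ is not a polynomial for $n\geq 3$) reaches the same conclusion without any curvature computation. For the final PDE, the paper simply quotes \cite[Lemma~3.3]{Zhou-2}, whereas you sketch a self-contained characteristics/Bernoulli argument; either route lands on the stated formula. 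Your three-way split of $C_1$ according to whether $c_1+2c_3$ and $1-2c_3r^2$ vanish is logically equivalent to the paper's two-way split on the full coefficient $1-2c_3r^2+(c_1+2c_3)s^2$, since the latter vanishes identically in $s$ precisely when both pieces do.
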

\begin{proof}
Let $F$ be a Berwald spherically symmetric of  dimension $n\geq3$.  Since every Berwald metric is Landsbergian, then the geodesic spray  of $F$ is determined by \eqref{Zhou_P&Q};
$$P=c_1 s+\frac{c_2}{r^2}\sqrt{r^2-s^2}  , \quad Q=\frac{1}{2}c_0 s^2-\frac{c_2 s}{r^4}\sqrt{r^2-s^2}+c_3.$$
 Calculating the quantities
 $$P-sP_s=\frac{c_2}{\sqrt{r^2-s^2}}, \quad Q_s-sQ_{ss}=-\frac{c_2}{(r^2-s^2)^{3/2}},$$
  $$ P_{ss}=-\frac{c_2}{(r^2-s^2)^{3/2}}, \quad  Q_{sss}=\frac{3c_2}{(r^2-s^2)^{5/2}}.$$
 Since $F$ is Berwaldian,  then the mean curvature $E_{ij}=0$. Now, plugging the above quantities into the equation $E_{ij}=0$ implies
$$\frac{n c_2}{u\sqrt{r^2-s^2}}\left(  \delta_{ij}-\frac{r^2}{u^2(r^2-s^2)} y_iy_j -\frac{1}{r^2-s^2} x_ix_j +\frac{s}{u(r^2-s^2)}(x_iy_j+x_jy_i)\right)=0.$$
Contracting the above equation by $\delta^{ij}$ and using the properties $\delta^{ij}y_iy_j=u^2$, $\delta^{ij}x_ix_j=r^2$ and $\delta^{ij}x_iy_j=\langle x,y\rangle$, we have
$$ n (n-2)c_2 \sqrt{r^2-s^2}=0. $$
Since $n\geq3$ and the above equation holds for all $r$ and $s$, we must have $c_2=0$. Thus, we have
\begin{equation}
\label{Eq:Berwald-P-Q}
P=c_1 s  , \quad Q=\frac{1}{2}c_0 s^2+c_3.
\end{equation}
 Plugging   the above formulae of $P$ and $Q$ into the compatibility conditions \eqref{Comp_C_C_2}, we get

\begin{equation*}
    \begin{split}
       C_1 = &(1-2c_3r^2+(c_1+2c_3)s^2)\phi_s-s( c_1+2c_3)\phi =0,   \\
        C_2 =& \frac{1}{r}\phi_r-(c_1s+c_0 s(r^2-s^2))\phi_s-(c_1+c_0s^2) \phi =0.
    \end{split}
\end{equation*}
Now, we have two cases; the first case is $1-2c_3r^2+(c_1+2c_3)s^2\neq 0$, then solving the above two equations algebraically  implies
\begin{equation}
\label{Zhou_phi_1}
    \begin{split}
        \frac{\phi_s}{\phi}= & \frac{(c_1+2c_3)s }{1+(c_1+2c_3)s^2-2c_3r^2 },   \\
         \frac{\phi_r}{\phi}=&   \frac{r(c_0(1+c_1r^2)s^2+2c_1(c_1+2c_3)s^2+c_1(1-2c_3r^2))}{1+(c_1+2c_3)s^2-2c_3r^2}.
    \end{split}
\end{equation}
 Integrating $ \frac{\phi_s}{\phi}$ with respect to $s$ yields 
$$\phi=a(r) \sqrt{(c_1+2c_3) s^2-2c_3 r^2+1},$$
where $a(r)$ is to be chosen such that both formulae of  \eqref{Zhou_phi_1} are satisfied, that is, calculating $\frac{\phi_r}{\phi}$ and equaling it with the second formula of \eqref{Zhou_phi_1} we obtain $\phi$.   Consequently, the metric $$F=u\phi= a(r)    \sqrt{(c_1+2c_3) \langle x,y\rangle^2+(-2c_3|x|^2+1) |y|^2} $$ is Riemannian.
 
 The second case is $1-2c_3r^2+(c_1+2c_3)s^2= 0$. This implies $c_1=-\frac{1}{r^2}$ and $c_3=\frac{1}{2r^2}$.  In this case, the functions $P$ and $Q$ are given by
 $$P= -\frac{s}{r^2}  , \quad Q=\frac{1}{2}c_0 s^2+\frac{1}{2r^2}.$$
 Hence, the compatibility conditions \eqref{Comp_C_C_2}  reduced to  
 \begin{equation*}
    \begin{split}
        C_2 =& \frac{1}{r}\phi_r-(-\frac{s}{r^2}+c_0 s(r^2-s^2))\phi_s+(\frac{1}{r^2}-c_0s^2) \phi =0.
    \end{split}
\end{equation*}
According to \cite[Lemma 3.3]{Zhou-2}, the general solution of the above equation is given by
$$
\phi=s\ \psi\left(\frac{s^2}{g(r)+s^2 \int 4 r c_0(r) g(r) d r}\right) e^{-\int\left(\frac{2}{r}-2 r^3 c_0(r)\right) d r} 
$$
where $g(r)=e^{\int\left(\frac{2}{r}-4 r^3 c_0(r)\right) d r}$.
\end{proof}

\begin{corollary} \label{corollary}
A spherically symmetric Finsler metric of dimension $n\geq 3$ is Berwaldian if and only if 
$$P=f_1 s  , \quad Q=f_2 s^2+f_3,$$
where $f_1$, $f_2$ and $f_3$ are arbitrary functions of $r$.
\end{corollary}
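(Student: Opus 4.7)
The plan is to prove both directions by combining Theorem \ref{Th_Riemannian} with a direct check on the Berwald curvature formula \eqref{Eq:G^i_{jkl}}.

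For the forward direction, I would invoke Theorem \ref{Th_Riemannian}. In the Riemannian branch, the functions $P$ and $Q$ are read off from \eqref{Eq:Berwald-P-Q} (before solving the compatibility conditions), giving $P = c_1 s$ and $Q = \tfrac{1}{2} c_0 s^2 + c_3$, which fits the claimed form with $f_1 = c_1$, $f_2 = \tfrac{1}{2} c_0$, $f_3 = c_3$. In the second branch the explicit values $c_1 = -1/r^2$, $c_3 = 1/(2r^2)$ still leave $P = f_1 s$ and $Q = f_2 s^2 + f_3$. So both cases of Theorem \ref{Th_Riemannian} already yield the asserted structural form of $P$ and $Q$.

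For the converse, I would substitute $P = f_1(r)\, s$ and $Q = f_2(r)\, s^2 + f_3(r)$ into the Berwald curvature formula \eqref{Eq:G^i_{jkl}} and verify that every coefficient vanishes. The scalar quantities that appear as coefficients all fall into three families: those built from derivatives of $P$ alone, those built from derivatives of $Q$ alone, and mixed combinations. Direct differentiation gives $P_s = f_1$, $P_{ss} = P_{sss} = 0$, so
\begin{equation*}
P - s P_s = 0,\qquad s^2 P_{ss} + s P_s - P = 0,\qquad 3P - s^3 P_{sss} - 6 s^2 P_{ss} - 3 s P_s = 0,
\end{equation*}
and every coefficient in \eqref{Eq:G^i_{jkl}} containing a factor of $P_{ss}$, $P_{sss}$, or $P - sP_s$ reduces to zero. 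Similarly $Q_s = 2 f_2 s$, $Q_{ss} = 2 f_2$, $Q_{sss} = 0$ yield
\begin{equation*}
Q_s - s Q_{ss} = 0,\qquad s^2 Q_{ss} - s Q_s = 0,\qquad s^2 Q_{sss} + s Q_{ss} - Q_s = 0,\qquad 3 s Q_s - 3 s^2 Q_{ss} - s^3 Q_{sss} = 0,
\end{equation*}
so all $Q$-type coefficients vanish as well. Hence $G^i_{jk\ell} \equiv 0$, and $F$ is a Berwald metric.

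The main obstacle, if any, is purely bookkeeping: making sure that every one of the roughly fourteen tensorial terms in \eqref{Eq:G^i_{jkl}} is accounted for and that each of its scalar coefficients belongs to one of the two vanishing families above. Since the required identities $P = s P_s$ and $Q_s = s Q_{ss}$ hold identically for the stated polynomial form of $P$ and $Q$, no compatibility argument or appeal to Theorem \ref{Theorem_A} is needed in the converse; the result follows directly from the polynomial structure of $P$ and $Q$ in $s$.
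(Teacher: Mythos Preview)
Your proposal is correct and follows essentially the same approach as the paper: for the forward direction both you and the paper appeal to the intermediate step \eqref{Eq:Berwald-P-Q} from the proof of Theorem~\ref{Th_Riemannian}, and for the converse both substitute the polynomial form of $P$ and $Q$ into \eqref{Eq:G^i_{jkl}} and observe that every scalar coefficient vanishes. Your converse is simply a bit more explicit in checking all the $P$- and $Q$-type combinations (including $P_{sss}=0$ and the higher-order mixed terms), whereas the paper lists only the four basic identities $P-sP_s=0$, $P_{ss}=0$, $Q_s-sQ_{ss}=0$, $Q_{sss}=0$ and leaves the rest implicit.
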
 
\begin{proof}
Assume that $F$ is a spherically symmetric metric with the geodesic spray given by the functions
 $$P=f_1 s  , \quad Q=f_2 s^2+f_3.$$
Then, we have $$P-sP_s=0,\quad P_{ss}=0,\quad Q_s-sQ_{ss}=0, \quad Q_{sss}=0.$$
That is, the Berwald curvature \eqref{Eq:G^i_{jkl}} vanishes and hence the metric is Berwaldian.  Conversely, assume that $F$ is Berwaldian, then by \eqref{Eq:Berwald-P-Q}, the functions $P$ and $Q$ can be written in the form  
$$P=f_1 s  , \quad Q=f_2 s^2+f_3.$$
This completes the proof.
\end{proof}

As a simple class of spherically symmetric Finsler metrics of Berwald type, we have the following class.

\begin{theorem}
All   spherically symmetric metrics in which the function $\phi$ is homogeneous of degree $-1$ in $r$ and $s$ are Berwaldian.   
\end{theorem}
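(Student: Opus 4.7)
The plan is to use the Corollary just proved, which characterizes Berwaldian spherically symmetric metrics of dimension $n\ge 3$ via $P=f_1 s$ and $Q=f_2 s^2+f_3$. So it suffices to compute $P$ and $Q$ from the formulas \eqref{P,Q} under the homogeneity assumption and check that they have this form. (The Corollary is stated for $n\ge 3$, but the same conclusion for surfaces follows from Theorem \ref{First_surface_result} together with Proposition \ref{Berwald_surface}, so the argument works in all dimensions.)

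First I would exploit Euler's relation. Since $\phi$ is homogeneous of degree $-1$ in $(r,s)$, we have
\begin{equation*}
r\phi_r+s\phi_s=-\phi.
\end{equation*}
Differentiating this identity with respect to $s$ gives $r\phi_{rs}=-2\phi_s-s\phi_{ss}$, and solving for $\phi_r$ and $\phi_{rs}$ yields
\begin{equation*}
\phi_r=-\frac{\phi+s\phi_s}{r},\qquad \phi_{rs}=-\frac{2\phi_s+s\phi_{ss}}{r}.
\end{equation*}

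Next I would substitute these into the numerator of $Q$ in \eqref{P,Q}. A direct calculation gives
\begin{equation*}
-\phi_r+s\phi_{rs}+r\phi_{ss}=\frac{\phi+s\phi_s}{r}+\frac{-2s\phi_s-s^2\phi_{ss}}{r}+r\phi_{ss}=\frac{1}{r}\bigl(\phi-s\phi_s+(r^2-s^2)\phi_{ss}\bigr),
\end{equation*}
so the numerator is exactly $1/r$ times the denominator and thus
\begin{equation*}
Q=\frac{1}{2r^2}.
\end{equation*}
This already has the required form $f_2 s^2+f_3$ with $f_2=0$ and $f_3=\frac{1}{2r^2}$.

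Finally, plugging $Q=\frac{1}{2r^2}$ and $s\phi_r+r\phi_s=\frac{-s\phi+(r^2-s^2)\phi_s}{r}$ (again from Euler's relation) into the formula for $P$ in \eqref{P,Q}, the terms involving $(r^2-s^2)\phi_s$ cancel and one is left with
\begin{equation*}
P=-\frac{s}{r^2},
\end{equation*}
which is of the form $f_1 s$ with $f_1=-\frac{1}{r^2}$. By Corollary \ref{corollary}, $F$ is Berwaldian. There is no real obstacle here: the only substantive step is noticing that Euler's relation forces the numerator and denominator of $Q$ to be proportional, after which the rest is bookkeeping. It is worth remarking that the resulting spray $P=-s/r^2$, $Q=\frac{1}{2}c_0 s^2+\frac{1}{2r^2}$ corresponds to the second (non-Riemannian) branch of Theorem \ref{Th_Riemannian} with $c_0\equiv 0$, so homogeneous $\phi$ of degree $-1$ produces genuinely non-Riemannian Berwald metrics in general.
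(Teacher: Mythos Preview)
Your proof is correct and follows essentially the same route as the paper's: use Euler's relation and its $s$-derivative to show that the numerator of $Q$ in \eqref{P,Q} equals $1/r$ times its denominator, giving $Q=\tfrac{1}{2r^2}$, then substitute back to get $P=-\tfrac{s}{r^2}$, and invoke Corollary~\ref{corollary}. Your parenthetical covering the case $n=2$ is even a bit more careful than the paper, which applies the Corollary without commenting on dimension.
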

\begin{proof}
Let $F=u\phi$ be a spherically symmetric metric  such that $\phi$ is homogeneous of degree $-1$ in $r$ and $s$. Then by Euler's Theorem of homogeneous functions, we have
$$r\phi_r+s\phi_s=-\phi.$$
Differentiating the above equation with respect to $s$, we get
$$r\phi_{rs}+s\phi_{ss}=-2\phi_s.$$
Then, by substituting from the above two equations into \eqref{P,Q} to get the functions $P$ and $Q$ as follows:  to get $Q$, we substitute by $\phi$ and $\phi_{rs}$. To obtain $P$, we substitute by $\phi$. Therefore, we find that 
$$Q=\frac{1}{2r^2}, \quad P=-\frac{s}{r^2}.$$
By Corollary \ref{corollary}, $F$ is Berwaldian. Moreover, for the above formulae of $P$ and $Q$, one can see  that the compatibility conditions are satisfied. 
\end{proof}

\section{Landsberg surfaces}

 The following theorem completes the classification, done in \cite{Elgendi-SSM}, of Landsberg spherically symmetric Finsler metrics.
 
   \begin{theorem}
    \label{2nd_surface_result}
   All Landsberg spherically symmetric Finsler surfaces are Berwaldain. 
   \end{theorem}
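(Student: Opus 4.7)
By Proposition~\ref{Berwald_surface}, a 2D spherically symmetric metric $F=u\phi$ is Berwaldian if and only if $sH-(r^2-s^2)H_s=0$. The plan is therefore to show that this identity is forced by the Landsberg equation for surfaces together with the compatibility conditions \eqref{Comp_C_C_2} on $\phi$. Recall that Proposition~\ref{2D_L_ijk=0} puts the Landsberg condition in the form
$$\tfrac{1}{s}\big(sH-(r^2-s^2)H_s\big)\phi_s + \big((r^2-s^2)K_s-3sK\big)\phi = 0. \quad (\star)$$

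First I would dispose of the degenerate cases. If $\phi_s\equiv 0$, then $\phi$ depends only on $r$, so $F=|y|\phi(|x|)$ is a conformal rescaling of the Euclidean metric, hence Riemannian and Berwaldian. Assuming $\phi_s\not\equiv 0$, if $sH-(r^2-s^2)H_s\equiv 0$ we conclude at once by Proposition~\ref{Berwald_surface}. The substantive case is when both $\phi_s$ and $sH-(r^2-s^2)H_s$ are nonvanishing on some open set~$U$. On $U$, equation~$(\star)$ solves for $\phi_s/\phi$ as an explicit rational function of $P, Q$ and their $s$-derivatives. The compatibility condition $C_1=0$ gives a second expression for the same ratio $\phi_s/\phi$, in terms of $P,Q,P_s,Q_s$. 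Equating them yields an algebraic identity $(I_1)$ involving $P, Q$ and their $s$-derivatives only, with $\phi$ completely eliminated.

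To exploit the $r$-dependence, I would combine $C_2=0$ (which similarly expresses $\phi_r/\phi$) with the integrability relation $\partial_r(\phi_s/\phi)=\partial_s(\phi_r/\phi)$ to produce a second algebraic identity $(I_2)$. I would then expand $(I_1)$ and $(I_2)$ in the natural basis $\{s^k,\ s^k/\sqrt{r^2-s^2}\}$ suggested by Theorem~\ref{First_surface_result}, and read off coefficients. This is the same general strategy that was used in the higher-dimensional setting of Theorem~\ref{Th_Riemannian}, where the analogous mean-Berwald equation forced $c_2=0$; here the expected outcome is that the system $(I_1),(I_2)$ is inconsistent unless the coefficient $sH-(r^2-s^2)H_s$ vanishes identically on $U$, yielding a contradiction and hence the conclusion via Proposition~\ref{Berwald_surface}.

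The main obstacle is the bulk of the symbolic algebra in this last step. Both $H$ and $K$ involve $s$-derivatives of $P$ and $Q$ up to third order, so $(I_1)$ is a rational identity of high degree in $s$, and $(I_2)$ additionally involves $r$-derivatives of these quantities. Careful bookkeeping of the coefficients in the $\{s^k,\ s^k/\sqrt{r^2-s^2}\}$ basis -- possibly aided by computer algebra -- will be required to exhibit the forced vanishing of $sH-(r^2-s^2)H_s$ without getting lost in the expansion.
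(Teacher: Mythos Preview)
Your overall plan coincides with the paper's: both reduce the problem, via Proposition~\ref{Berwald_surface}, to showing that the coefficients of $(\star)$ are forced to vanish, and both do so by treating $(\star)$ together with the compatibility condition $C_1=0$ as a homogeneous linear system in $(\phi,\phi_s)$ which admits a nonzero solution only if one equation is trivial. The paper's execution of the final step differs from yours: instead of forming abstract eliminants $(I_1),(I_2)$ and expanding in a basis, the paper rewrites $(\star)$ and $C_1$ jointly (its equations (4.3)--(4.7)), substitutes the explicit Berwald data of Theorem~\ref{First_surface_result} together with the induced $\phi_s/\phi$, and exhibits a specific choice of the free functions $a,b_0,\dots,b_3$ for which the combined relation is violated; this is what the paper means by the two conditions being ``not compatible''. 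Your route through $C_2$ and the integrability relation $\partial_r(\phi_s/\phi)=\partial_s(\phi_r/\phi)$ is not used in the paper at all.

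There is, however, a genuine gap in your proposed endgame. The basis $\{s^k,\ s^k/\sqrt{r^2-s^2}\}$ is borrowed from the \emph{Berwald} classification of Theorem~\ref{First_surface_result}; for a hypothetically non-Berwald Landsberg surface the functions $P,Q$ are unknown, being determined by $\phi$ through \eqref{P,Q}, and carry no a~priori finite expansion of this type. Hence your identities $(I_1),(I_2)$ are differential constraints on the unknown pair $(P,Q)$ (equivalently on $\phi$), not polynomial identities in $s$ whose coefficients can be ``read off''. The analogy with the higher-dimensional case is misleading: in Theorem~\ref{Th_Riemannian} one already knows, via Theorem~\ref{Theorem_A}, that $P,Q$ have the concrete form \eqref{Zhou_P&Q}, so coefficient-matching is legitimate; here no such structural information about $P,Q$ is available in advance, and introducing $(I_2)$ only adds $r$-derivatives of the same unknowns. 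The ``bulk of symbolic algebra'' you flag as the main obstacle is therefore really a conceptual one: before any coefficient comparison can be carried out you would need an independent argument restricting the $s$-dependence of $P$ and $Q$, which your outline does not supply.
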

  \begin{proof}
  Let $F=u\phi$ be a Landsberg spherically symmetric Finsler surface.  Then the condition \eqref{Main_Lands_surface_cond}  together with the compatibility conditions   \eqref{Comp_C_C_2} are satisfied. 
  
    Now, at each point $(x,y)\in T\mathbb{B}^n(r_0)\backslash \{0\}$, we can  consider   the   compatibility condition $C_1=0$ and the Landsberg condition \eqref{Main_Lands_surface_cond}    are algebraic homogeneous equations in $\phi$ and $\phi_s$. Therefore, at each point the two equations  must be  dependent so that $\phi$ has  non-zero values,  since in the regular case,  $\phi$ is positive. If we allow the surface to be  singular in certain direction, then it may happen that $\phi$ is zero at that direction, in this case,  the two equations should be dependent  on a subset of $T\mathbb{B}^n(r_0)$.  Otherwise, the functions $\phi$ and $\phi_s$ vanish.   
    
      We claim that the two   equations  are not compatible in the sense that will be explained below and hence the coefficients of $\phi_s$ and $\phi$ in \eqref{Main_Lands_surface_cond} must vanish. Therefore, $sH-(r^2-s^2)H_s=0$  and by Proposition \ref{Berwald_surface}, the surface is Berwaldian.
    
   \medskip 
    
  Now, let's show that the   conditions \eqref{Main_Lands_surface_cond}  and     \eqref{Comp_C_C_2} are not compatible.  We consider  a Berwald surface, then by Theorem \ref{First_surface_result}, the functions $P$ and $Q$ are given by
    \begin{equation}
\label{Eq:Counter}
    \begin{split}
        P&=b_1 s+ \frac{b_2}{ \sqrt{r^2-s^2}} +\frac{b_3 (r^2-2s^2)}{\sqrt{r^2-s^2}} ,\\ 
        Q&= b_0 s^2+\frac{1}{2} b_1+ \frac{b_2 s (r^2-2 s^2)}{r^4 \sqrt{r^2-s^2}} -\frac{b_3 s (3r^2-2s^2)}{r^2\sqrt{r^2-s^2}}-\frac{a  }{r^2} s \sqrt{r^2-s^2}.
    \end{split}
\end{equation}
    Substituting by the above formulae of $P$ and $Q$ into the compatibility condition $C_1$, we get
\begin{equation}
\label{Zhou_phi_2}
    \begin{split}
        \frac{\phi_s}{\phi}= & \frac{2b_1s\sqrt{r^2-s^2}-as^2+2b_3r^2-4b_3s^2+2b_2 }{\sqrt{r^2-s^2}(1-b_1r^2+2b_1s^2+s\sqrt{r^2-s^2}(a+4b_3)) }.  
    \end{split}
\end{equation}

It should be noted that the denominator $1-b_1r^2+2b_1s^2+s\sqrt{r^2-s^2}(a+4b_3)\neq 0$. Indeed, assume that $1-b_1r^2+2b_1s^2+s\sqrt{r^2-s^2}(a+4b_3)= 0$.  Removing the square root and combining the like terms imply
$$\left( {a}^{2}+8 a{ b_3}+4{{ b_1}}^{2}+16 {{ b_3}}^{2}
 \right) {s}^{4}- \left( {a}^{2}{r}^{2}+8 a{ b_3} {r}^{2}+4 {{
 b_1}}^{2}{r}^{2}+16 {{ b_3}}^{2}{r}^{2}-4 { b_1} \right) {s}^
{2}+{{ b_1}}^{2}{r}^{4}-2 { b_1} {r}^{2}+1
=0.$$
Since the above equation is valid for all $s$, we have $b_1=\frac{1}{r^2}$, $a=-4b_3$. Substituting these formulae in to the coefficient of $s^4$,   we have
$${a}^{2}+8 a{ b_3}+4{{ b_1}}^{2}+16 {{ b_3}}^{2}=4\neq0.$$
Which is a contradiction. 

\medskip

Now, we can rewrite the Landsberg condition \eqref{Main_Lands_surface_cond} as follows
    \begin{equation}
    \begin{split}
        &(s\phi+(r^2-s^2)\phi_s)((r^2-s^2)Q_{sss}+3(Q_s-sQ_{ss}))\\
       &+((r^2-s^2)P_{sss}-3sP_{ss})\phi+(3(r^2-s^2)P_{ss}+3(P-sP_s))\phi_s=0.   
    \end{split}
\end{equation}
By the formula of $\frac{\phi_s}{\phi}$ given in \eqref{Zhou_phi_2}, one can see that the compatibility condition $C_1$ is satisfied. 

For a choice of the functions $b_2(r)$ and $b_3(r)$ such that $b_2-b_3r^2\neq 0$, we have
$$(r^2-s^2)Q_{sss}+3(Q_s-sQ_{ss})=-\frac{6r^2(b_2-b_3r^2)}{(r^2-s^2)^{5/2}}\neq 0.
$$
 Then, we have
\begin{equation}
\label{Eq:L-1}
    \begin{split}
      (s\phi+(r^2-s^2)\phi_s)  =-\frac{((r^2-s^2)P_{sss}-3sP_{ss})\phi+(3(r^2-s^2)P_{ss}+3(P-sP_s))\phi_s}{(r^2-s^2)Q_{sss}+3(Q_s-sQ_{ss})}
    \end{split}
\end{equation}
Moreover, the condition $C_1$ can be rewritten in the form
\begin{equation}
\label{Eq:C1-1}
       C_1 = -(s \phi+(r^2-s^2)\phi_s)(2Q-sQ_s)+(1+sP)\phi_s-(sP_s-2P)\phi =0
\end{equation}
Now, substituting from \eqref{Eq:L-1} into \eqref{Eq:C1-1}, we get
    \begin{equation}
    \label{Eq:LC-1}
    \begin{split}
         & \frac{(2Q-sQ_s)[((r^2-s^2)P_{sss}-3sP_{ss})\phi+(3(r^2-s^2)P_{ss}+3(P-sP_s))\phi_s]}{(r^2-s^2)Q_{sss}+3(Q_s-sQ_{ss})}\\
          &+(1+sP)\phi_s+(sP_s-2P)\phi =0
        \end{split}
\end{equation}
 Dividing both sides of \eqref{Eq:LC-1} by $\phi$ we have
\begin{equation}
\label{Eq:LC-2}
    \begin{split}
         & \frac{(2Q-sQ_s)[((r^2-s^2)P_{sss}-3sP_{ss})+(3(r^2-s^2)P_{ss}+3(P-sP_s))\frac{\phi_s}{\phi}]}{(r^2-s^2)Q_{sss}+3(Q_s-sQ_{ss})}\\
          &+(1+sP)\frac{\phi_s}{\phi}+(sP_s-2P)  =0
        \end{split}
\end{equation}
The substitution from \eqref{Eq:Counter} and \eqref{Zhou_phi_2} into \eqref{Eq:LC-2} implies that the left hand side of \eqref{Eq:LC-2} is non zero. For example, consider the choice 
$$a=0,\quad b_1=-\frac{1}{r^2}, \quad b_2=r^2, \quad b_3=0.$$
Then,  the left hand side of \eqref{Eq:LC-2} is given by
$$\frac{s((r^2-s^2)^2(1+r^2)+r^8)-r^6\sqrt{r^2-s^2}(r^2-s^2+1)}{r^2(r^2-s^2)^2}.$$
And this is a contradiction and the proof is completed.
   \end{proof}
   
   \begin{remark}
   It should be noted that the above proof of the above theorem dose not depend on whether the surface is regular or not.
   \end{remark}
   
\begin{landscape}
\begin{center}{\bf{Table 1: Classification of Landsberg and Berwald spherically symmetric metrics}}
\end{center}
{\begin{table}[h]
\centering
\begin{tabular}{|c|cc|cc|}
\hline
\multirow{2}{*}{\textbf{Type}}      & \multicolumn{2}{c|}{\multirow{2}{*}{\textbf{Landsberg}}}                                                          & \multicolumn{2}{c|}{\multirow{2}{*}{\textbf{Berwald}}}                                       \\
                           & \multicolumn{2}{c|}{}           & \multicolumn{2}{c|}{}                                                    \\ \hline
\multirow{2}{*}{\textbf{Dimension} }  & \multicolumn{1}{c|}{\multirow{2}{*}{$n=2$}} & \multirow{2}{*}{$n\geq 3$} & \multicolumn{1}{c|}{\multirow{2}{*}{$n=2$}} & \multirow{2}{*}{$n\geq 3$} \\ 
 & \multicolumn{1}{c|}{}                  & \multicolumn{1}{c|}{}                       & \multicolumn{1}{c|}{}                  &                        \\  \hline
\multirow{2}{*}{P}           & \multicolumn{1}{c|}{\multirow{2}{*}{as the Berwaldian surface}}    &    \multirow{2}{*}{ $P=c_1 s+\frac{c_2}{r^2}\sqrt{r^2-s^2}$    }               & \multicolumn{1}{c|}{\multirow{2}{*}{$P=b_1 s+ \frac{b_2}{ \sqrt{r^2-s^2}} +\frac{b_3 (r^2-2s^2)}{\sqrt{r^2-s^2}} $}  }  &     \multirow{2}{*}{        $ P=c_1 s $     }   \\ 
  & \multicolumn{1}{c|}{}                  & \multicolumn{1}{c|}{}                       & \multicolumn{1}{c|}{}                  &                        \\ \hline
\multirow{2}{*}{Q}           & \multicolumn{1}{c|}{\multirow{2}{*}{as the Berwaldian surface}}    &      \multirow{2}{*}{ $ Q=\frac{1}{2}c_0 s^2-\frac{c_2 s}{r^4}\sqrt{r^2-s^2}  $    }       & \multicolumn{1}{c|}{\multirow{2}{*}{$Q=b_0 s^2+\frac{1}{2} b_1+ \frac{b_2 s (r^2-2 s^2)}{r^4 \sqrt{r^2-s^2}} $}}    &    \multirow{2}{*}{         $Q=\frac{1}{2}c_0 s^2+c_3$ }    \\ 
 & \multicolumn{1}{c|}{}                  & \multicolumn{1}{c|}{}                       & \multicolumn{1}{c|}{}                  &                        \\ 
 & \multicolumn{1}{c|}{}                  & \multicolumn{1}{c|}{$+c_3$}                       & \multicolumn{1}{c|}{$-\frac{b_3 s (3r^2-2s^2)}{r^2\sqrt{r^2-s^2}}-\frac{a  }{r^2} s \sqrt{r^2-s^2}$}                  &                        \\  
  & \multicolumn{1}{c|}{}                  & \multicolumn{1}{c|}{}                       & \multicolumn{1}{c|}{}                  &                        \\ \hline
\multirow{2}{*}{\textbf{Regular examples} }    & \multicolumn{4}{c|}{  \multirow{2}{*}{   only Berwaldian examples exist  }          }            \\
                
                                                                                             & \multicolumn{4}{c|}{}                                                               \\ 
 \hline
\multirow{2}{*}{\textbf{Non-regular examples}}& \multicolumn{1}{c|}{\multirow{2}{*}{Berwaldian surfaces exist }  }    &  \multirow{2}{*}{   Some examples exist  }                 & \multicolumn{1}{c|}{\multirow{2}{*}{Some examples exist}}    &     \multicolumn{1}{c|}{\multirow{2}{*}{Some examples exist}}              \\ 
 & \multicolumn{1}{c|}{}                  & \multicolumn{1}{c|}{}                       & \multicolumn{1}{c|}{}                  &                        \\ \hline
\multirow{2}{*}{\textbf{Concrete examples}}    & \multicolumn{1}{c|}{\multirow{2}{*}{$\displaystyle{F=\frac{u}{r}\phi\left(\frac{s}{r}\right)}$}}    &                       \multicolumn{1}{c|}{\multirow{2}{*}{see \cite[Examples 1 and 2]{Elgendi-SSM} }}  & \multicolumn{1}{c|}{\multirow{2}{*}{$\displaystyle{F=\frac{u}{r}\phi\left(\frac{s}{r}\right)}$}}    &      \multicolumn{1}{c|}{\multirow{2}{*}{$\displaystyle{F=\frac{u}{r}\phi\left(\frac{s}{r}\right)}$}}                     \\ 
 & \multicolumn{1}{c|}{}                  & \multicolumn{1}{c|}{}                       & \multicolumn{1}{c|}{}                  &                        \\ \hline
\end{tabular}
\end{table}}
Where, $c_0$, $c_1$, $c_2$, $c_3$, $b_1$, $b_2$, $b_3$, and $a$ are arbitrary functions of $r$. 
\end{landscape}


\providecommand{\bysame}{\leavevmode\hbox
to3em{\hrulefill}\thinspace}
\providecommand{\MR}{\relax\ifhmode\unskip\space\fi MR }
\providecommand{\MRhref}[2]{%
  \href{http://www.ams.org/mathscinet-getitem?mr=#1}{#2}
} \providecommand{\href}[2]{#2}

\end{document}